\setlist[itemize]{topsep=0ex,itemsep=0ex,parsep=0ex}
\setlist[enumerate]{topsep=0ex,itemsep=0ex,parsep=0ex}
\crefname{lem}{Lemma}{Lemmas}
\crefname{thm}{Theorem}{Theorems}
\crefname{prop}{Proposition}{Propositions}
\crefname{cor}{Corollary}{Corollaries}
\def\NAT@spacechar{~}
\renewcommand{\baselinestretch}{1.15}
\renewcommand{\epsilon}{\varepsilon}
\renewcommand{\emptyset}{\varnothing}
\renewcommand{\geq}{\geqslant}
\renewcommand{\leq}{\leqslant}
\renewcommand{\thefootnote}{\fnsymbol{footnote}}
\theoremstyle{plain}
\newtheorem{thm}{Theorem}
\newtheorem{cor}[thm]{Corollary}
\newtheorem{prop}[thm]{Proposition}
\newtheorem*{lem*}{Lemma}
\theoremstyle{definition}
\newtheorem{conj}[thm]{Conjecture}
\newtheorem*{conj*}{Conjecture}
\newtheorem{remark}{Remark}
\newtheorem{observation}{Observation}
\newtheorem{example}[thm]{Example}
\theoremstyle{problem}
\begin{document}
\title{\bf\boldmath\fontsize{18pt}{18pt}\selectfont Domination Parameters of Graph Covers}

\author{%
Dickson~Y.~B. Annor\,\footnotemark[5] 
\qquad
}

\date{}

\maketitle

\begin{abstract}

A graph $G$ is a \emph{cover} of a graph $F$ if there exists an onto mapping $\pi : V(G) \to V(F)$, called a (\emph{covering}) \emph{projection}, such that $\pi$ maps the neighbours of any vertex $v$ in $G$ bijectively onto the neighbours of $\pi(v)$ in $F$. This paper is the first attempt to study the connection between domination parameters and graph covers. We focus on the domination number, the total domination number, and the connected domination number. We prove upper and lower bounds for the domination parameters of $G$.
Moreover, we propose a conjecture on the lower bound for the domination number of $G$ and provide evidence to support the conjecture. 
\end{abstract}

\textbf{Keywords:} domination, total domination, connected domination, graph cover.

\textbf{2020 Mathematics Subject Classification:} 05C69.

\footnotetext[5]{Department of Mathematical and Physical Sciences, La Trobe University, Bendigo, Australia (\texttt{d.annor@latrobe.edu.au}).
Research of Annor supported by a La Trobe Graduate Research Scholarship. 
}


\renewcommand{\thefootnote}{\arabic{footnote}}

\section{Introduction}

In this paper, we deal with finite undirected and simple graphs. 
For a graph $F$, let $V(F)$ and $E(F)$ respectively denote the vertex set and the edge set of $F$.

A graph $G$ is a \emph{cover} of a graph $F$ if there exists an onto mapping $\pi : V(G) \to V(F)$, called a (\emph{covering}) \emph{projection}, such that $\pi$ maps the neighbours of any vertex $v$ in $G$ bijectively onto the neighbours of $\pi(v)$ in $F$. Note that every graph is a cover of itself via the identity projection. 
If $F$ is connected, then $|\pi^{-1}(v)| = k$ is the same for all $v \in V(F)$ and $\pi$ is called a \emph{$k$-fold} cover. We often call a $2$-fold cover a \emph{double cover}. See Figure~\ref{fig:petersencover} for an example of a cover. Here, we can define the vertex projection $\pi$ by labelling every vertex $v$ of $G$ with their image $\pi(v)$. We often add a dash or subscript to distinguish between two or more vertices of $G$ projected to the same vertex of $F$. Let graph $G$ be a cover of graph $F$. For any subgraph $F'$ of $F$, we call the graph $G' = \pi^{-1}(F')$ the \emph{lift of} $F'$ \emph{into G}.

The following proposition illustrates some properties of covers.
\begin{prop}\label{prop:pertiesgc}
   Let graph $G$ be a cover of graph $F$ via the projection $\pi$. 
\begin{enumerate}[label=(\roman*),ref=\roman*]
    \item\label{it:dgpreserve}  It holds that $\deg_{G}(v) = \deg_{F}(\pi(v))$ for each vertex $v \in V(G)$, where $\deg_G(v)$ is the degree of a vertex $v$ in $G$.
    \item\label{it:compt} The lift of a tree $T$ into $G$ consists of a collection of disjoint trees isomorphic to $T$. Hence, a cover of a tree is a forest.
    \item\label{it:liftcycles} The lift of  a cycle $C_n$ into $G$ consists of a collection of disjoint cycles whose lengths are multiples of $n$.
\end{enumerate}
\end{prop}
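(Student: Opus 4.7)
The plan is to derive all three items from the defining property of a cover, namely that $\pi$ restricts to a bijection between the neighbourhood of each $v \in V(G)$ and the neighbourhood of $\pi(v)$ in $F$. The fundamental consequence I will use repeatedly is \emph{path lifting}: for any walk $u_0, u_1, \ldots, u_\ell$ in $F$ and any $\tilde{u}_0 \in \pi^{-1}(u_0)$, there is a unique walk $\tilde{u}_0, \tilde{u}_1, \ldots, \tilde{u}_\ell$ in $G$ projecting to it, obtained step by step by choosing $\tilde{u}_{i+1}$ to be the unique lift of $u_{i+1}$ in the neighbourhood of $\tilde{u}_i$. Item (\ref{it:dgpreserve}) is then immediate, since a bijection between $N_G(v)$ and $N_F(\pi(v))$ preserves cardinalities.

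For item (\ref{it:compt}), I would fix a connected tree $T$ (the disconnected case reduces to this component by component) and a vertex $\tilde{u}$ in the lift $G' = \pi^{-1}(T)$, with $u = \pi(\tilde{u})$. Let $C$ be the component of $G'$ containing $\tilde{u}$. I would define a map $f : V(T) \to V(C)$ by taking $f(v)$ to be the endpoint of the lift of the unique $u$-to-$v$ path in $T$ that starts at $\tilde{u}$. That $f$ is an injective graph homomorphism follows from path lifting and the identity $\pi \circ f = \mathrm{id}_{V(T)}$. The main obstacle is surjectivity: I would take an arbitrary $\tilde{v} \in C$, choose any walk in $C$ from $\tilde{u}$ to $\tilde{v}$, and project it to a walk in $T$. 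In a tree, such a walk reduces to the unique $u$-to-$\pi(\tilde{v})$ path by repeatedly cancelling consecutive back-and-forth segments of the form $\ldots, a, b, a, \ldots$; I would argue via uniqueness of lifts that each such cancellation in $T$ corresponds to a cancellation in the lifted walk without changing its endpoint, giving $\tilde{v} = f(\pi(\tilde{v}))$.

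For item (\ref{it:liftcycles}), I would first use (\ref{it:dgpreserve}) to observe that every vertex of $\pi^{-1}(C_n)$ has degree exactly $2$ in the lift, so $\pi^{-1}(C_n)$ is a disjoint union of cycles. To identify the length of each, I would list $C_n$ as $v_0 v_1 \cdots v_{n-1} v_0$ and, starting at any $\tilde{v}_0 \in \pi^{-1}(v_0)$, iteratively lift the cyclic walk $v_0, v_1, v_2, \ldots$ to obtain a sequence $\tilde{v}_0, \tilde{v}_1, \tilde{v}_2, \ldots$ with $\pi(\tilde{v}_i) = v_{i \bmod n}$. Finiteness of $G$ forces a first return index $k$ with $\tilde{v}_k = \tilde{v}_0$; the constraint $\pi(\tilde{v}_k) = v_0$ forces $n \mid k$, and the component of $\tilde{v}_0$ in $\pi^{-1}(C_n)$ is therefore a cycle of length $k$, a multiple of $n$. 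I expect the only nontrivial step to be the surjectivity argument for $f$ in (\ref{it:compt}); the other parts are direct applications of the neighbourhood bijection together with a small amount of bookkeeping.
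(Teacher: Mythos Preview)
The paper states this proposition without proof, treating it as standard background, so there is no argument in the paper to compare your proposal against. Your plan is correct and is the standard covering-space approach: the path-lifting property is exactly the right tool, item~(\ref{it:dgpreserve}) is immediate from the neighbourhood bijection, your map $f$ together with the walk-reduction surjectivity argument handles item~(\ref{it:compt}), and the degree-$2$ observation plus cyclic lifting handles item~(\ref{it:liftcycles}).

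One small point worth making explicit in item~(\ref{it:compt}): after establishing that $f:T\to C$ is a bijective graph homomorphism, you still need that $f^{-1}$ preserves adjacency to conclude $C\cong T$ rather than merely that $T$ embeds in $C$. This is immediate, since $f^{-1}=\pi|_{V(C)}$ and $\pi$ sends edges to edges by the neighbourhood bijection, but it is the step that rules out extra edges in $C$.
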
\label{pro:coverpp}

\begin{figure}
    \centering
    \begin{tikzpicture}[scale=0.75]
\begin{scope}[shift ={(9.5, 0)}]
  \draw[ thick] (0,0) -- (3,2.5);   
\draw[ thick] (3,2.5) -- (6,0);
\draw[ thick] (1,-3) -- (5,-3);
\draw[ thick] (0,0) -- (1,-3);
\draw[ thick] (5,-3) -- (6,0);
\draw[ thick] (0,0) -- (1.5,0);
\draw[ thick] (3,2.5) -- (3,1);
\draw[ thick] (6,0) -- (4.5,-0.5);
\draw[ thick] (5,-3) -- (4,-2);
\draw[ thick] (1,-3) -- (2,-2);
\draw[ thick] (1.5,0) -- (4.5,-0.5);
\draw[ thick] (4.5,-0.5) -- (2,-2);
\draw[ thick] (2,-2) -- (3,1);
\draw[ thick] (3,1) -- (4,-2);
\draw[ thick] (4,-2) -- (1.5,0);
 \filldraw[black] (0,0) circle (2.5pt) node[anchor=east]{$1$};
  \filldraw[black] (3,2.5) circle (2.5pt) node[anchor=south]{$2$};
   \filldraw[black] (6,0) circle (2.5pt) node[anchor=west]{$3$};
\filldraw[black] (5,-3) circle (2.5pt) node[anchor=north]{$4$};
 \filldraw[black] (1,-3) circle (2.5pt) node[anchor=north]{$5$};
  \filldraw[black] (1.5,0) circle (2.5pt) node[anchor=south]{$a$};
   \filldraw[black] (4.5,-0.5) circle (2.5pt) node[anchor=south]{$b$};
 \filldraw[black] (2,-2) circle (2.5pt) node[anchor=north]{$c$};
  \filldraw[black] (3,1) circle (2.5pt) node[anchor=west]{$d$};
   \filldraw[black] (4,-2) circle (2.5pt) node[anchor=west]{$e$};  
\end{scope}

\begin{scope}
\draw[ thick] (1,2) -- (0,3);   
\draw[ thick] (5,1.8) -- (6,3);
\draw[ thick] (3,-3.5) -- (3,-5);
\draw[ thick] (0.1,-1.75) -- (-1.8,-1.85);
\draw[ thick] (5.85,-1.75) -- (7.5,-2);
\draw[ thick] (0,0) -- (1.5,-0.25);
\draw[ thick] (3,2.5) -- (3,1);
\draw[ thick] (6,0) -- (4.5,-0.5);
\draw[ thick] (5,-3) -- (4,-2);
\draw[ thick] (1,-3) -- (2,-2);
\draw[ thick] (1.5,-0.25) -- (3,1);
\draw[ thick] (3,1) -- (4.5,-0.5);
\draw[ thick] (4,-2) -- (4.5,-0.5);
\draw[ thick] (2,-2) -- (4,-2);
\draw[ thick] (2,-2) -- (1.5,-0.25); 
\draw[ thick] (0,0) -- (1,2);
\draw[ thick] (3,2.5) -- (1,2);
\draw[ thick] (3,2.5) -- (5,1.8);
\draw[ thick] (6,0) -- (5,1.8);
\draw[ thick] (6,0) -- (5.85,-1.75);
\draw[ thick] (5,-3) -- (5.85,-1.75);
\draw[ thick] (3,-3.5) -- (5,-3);
\draw[ thick] (3,-3.5) -- (1,-3);
\draw[ thick] (0,0) -- (0.1,-1.75);
\draw[ thick] (1,-3) -- (0.1,-1.75);
\draw[ thick] (0,3) .. controls (3, 4)  .. (6,3);
\draw[ thick] (7.5,-2) .. controls (7.1, 1)  .. (6,3);
\draw[ thick] (3,-5) .. controls (5.7, -4.1)  .. (7.5,-2);
\draw[ thick] (3,-5) .. controls (0, -4.5)  .. (-1.8,-1.85);
\draw[ thick] (0,3) .. controls (-1.5, 1)  .. (-1.8,-1.85);
 \filldraw[black] (0,0) circle (2.5pt) node[anchor=east]{$1$};
  \filldraw[black] (3,2.5) circle (2.5pt) node[anchor=south]{$3$};
   \filldraw[black] (6,0) circle (2.5pt) node[anchor=west]{$5$};
\filldraw[black] (5,-3) circle (2.5pt) node[anchor=north]{$2$};
 \filldraw[black] (1,-3) circle (2.5pt) node[anchor=north]{$4$};
  \filldraw[black] (1.5,-0.25) circle (2.5pt) node[anchor=south]{$a$};
   \filldraw[black] (4.5,-0.5) circle (2.5pt) node[anchor=south]{$c$};
 \filldraw[black] (2,-2) circle (2.5pt) node[anchor=north]{$e$};
  \filldraw[black] (3,1) circle (2.5pt) node[anchor=west]{$b$};
   \filldraw[black] (4,-2) circle (2.5pt) node[anchor=west]{$d$};
    \filldraw[black] (0,3) circle (2.5pt) node[anchor=south]{$d'$};
     \filldraw[black] (-1.8,-1.85) circle (2.5pt) node[anchor=east]{$c'$};
      \filldraw[black] (7.5,-2) circle (2.5pt) node[anchor=west]{$a'$};
       \filldraw[black] (3,-5) circle (2.5pt) node[anchor=north]{$b'$};
 \filldraw[black] (6,3) circle (2.5pt) node[anchor=west]{$e'$};
  \filldraw[black] (1,2) circle (2.5pt) node[anchor=south]{$2'$};
   \filldraw[black] (5,1.8) circle (2.5pt) node[anchor=west]{$4'$};
    \filldraw[black] (5.85,-1.75) circle (2.5pt) node[anchor=east]{$1'$};
     \filldraw[black] (3,-3.5) circle (2.5pt) node[anchor=south]{$3'$};
     \filldraw[black] (0.1,-1.75) circle (2.5pt) node[anchor=west]{$5'$};
   \end{scope}
    \end{tikzpicture}
    \caption{Dodecahedron (left) is a (double) cover of the Petersen graph (right).}
    \label{fig:petersencover}
\end{figure}
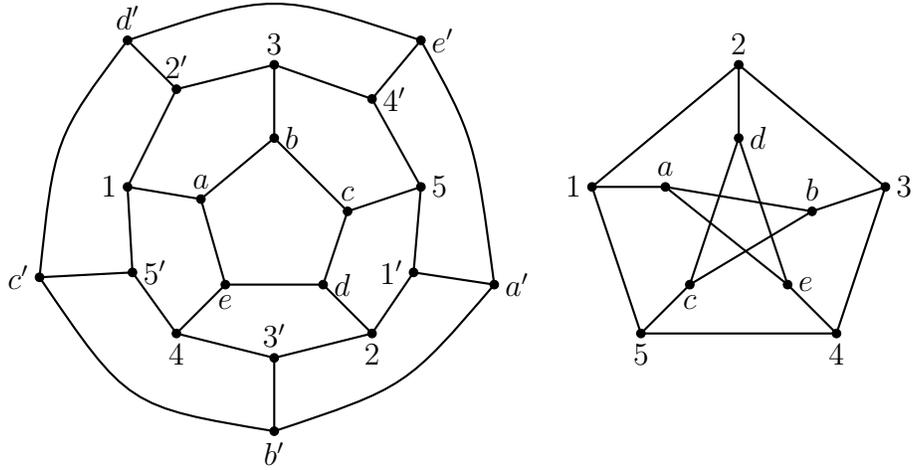

A lot of attention has been paid to graph covers from various perspectives. As purely graph-theoretic objects, graph covers were studied in \cite{gross2001topological,negami1988spherical,archdeacon1990parity}. Graph covers have also appeared in the literature as ``lifts'' of graphs, see \cite{amit2001random}.

Let $S$ be a subset of the vertex set of graph $F$. $S$  is called a \emph{dominating set} of $F$ if
every vertex not in $S$ is adjacent to at least one vertex in $S$. 
The \emph{domination number} of $F$, denoted by $\gamma(F)$, is the minimum cardinality of a dominating set of $F$. A \emph{total dominating set} of $F$ is a set
$S$ such that every vertex is adjacent to some vertex in $S$. Note that by the definition of a total dominating set, it is evident that a graph $F$ admits a total dominating set if and only if $F$ has no isolated vertices.
The \emph{total domination number} of $F$, denoted by $\gamma_t(F)$, is the minimum cardinality of a total dominating set of $F$. A \emph{connected dominating set} $S$ of $F$ is a set such that every vertex not in $S$ is adjacent with at least one vertex in $S$ and the subgraph induced by $S$ is connected. The \emph{connected domination number} $\gamma_c(F)$ is the minimum cardinality of a connected dominating set of $F$.

We saw in  Figure~\ref{fig:petersencover} that the dodecahedron is a cover of the Petersen graph.  One can check that $\gamma(\mathrm{Petersen\; graph}) = 3$ and $\gamma_t(\mathrm{Petersen \; graph}) = \gamma_c(\mathrm{Petersen \; graph}) = 4$. Moreover, $\gamma(\mathrm{Dodecahedron}) = 6, \gamma_t(\mathrm{Dodecahedron}) = 8$ and $\gamma_c(\mathrm{Dodecahedron}) = 10$.

The theory of domination in graphs is one of the main research areas in graph theory. 
For an excellent treatment of fundamentals of domination in graphs and recent topics, we refer the reader to the following books \cite{haynes2023domination,haynes2020topics,haynes2013fundamentals}.

One fundamental area of study in domination in graphs is how domination parameters change under graph operations. For example, Vizing \cite{vizing1968some} studied the connection between the domination number and the cartesian product of graphs, which led to Vizing's conjecture, which many believe to be the main open problem in the area of domination in graphs. Also, see \cite{,sumner1991critical,brigham1988vertex,fisher1998hamiltonicity,gravier1995domination,zwierzchowski2004domination} for other similar research. Graph covers give an interesting way to construct new graphs that are locally isomorphic to a given graph. As a result, graph covers have a lot of intriguing relationships with different types of graph parameters. For example, connections between connectivity and graph covers were carried out in \cite{annor2024three, negami2024another,amit2002random}. Also, see \cite{amit2001random} for other interesting graph-theoretic connections.

The preceding paragraph serves as the main motivation for this work.
Let graph $G$ be a cover of graph $F$. Our goal is to study the relation between $\gamma(G)$ and $\gamma(F)$, $\gamma_t(G)$ and $\gamma_t(F)$, and $\gamma_c(G)$ and $\gamma_c(F)$. To our knowledge, this is the first paper to discuss the connections between domination parameters and graph covers. 

Now we present theorems that are useful in our results. Note that from Proposition~\ref{prop:pertiesgc}\eqref{it:dgpreserve} we have $\Delta(F) = \Delta(G)$ and $\delta(G) = \delta(F)$. So, we write $\Delta$ and $\delta$ to mean the maximum and minimum degree, respectively, without specifying the graph.

\begin{thm}[\cite{walikar1979recent}\label{thm:deltalow}]
If $F$ is a graph of order $n$, then $\gamma(F) \geq \frac{n}{1 + \Delta}$. 
\end{thm}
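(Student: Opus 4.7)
The plan is to argue by a straightforward double-counting / covering argument based on the closed neighbourhoods of a minimum dominating set.

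First I would let $S$ be a minimum dominating set of $F$, so $|S| = \gamma(F)$. By definition, every vertex of $V(F)$ lies in the closed neighbourhood $N[v] = \{v\} \cup N(v)$ of some $v \in S$. In other words, $V(F) = \bigcup_{v \in S} N[v]$.

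Next I would bound the size of each closed neighbourhood: for every vertex $v$, $|N[v]| = 1 + \deg_F(v) \leq 1 + \Delta$. Applying a union bound to the covering $V(F) = \bigcup_{v \in S} N[v]$ yields
\begin{equation*}
n = |V(F)| \leq \sum_{v \in S} |N[v]| \leq |S|\,(1 + \Delta) = \gamma(F)\,(1 + \Delta),
\end{equation*}
and rearranging gives the desired bound $\gamma(F) \geq \frac{n}{1 + \Delta}$.

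There is no real obstacle here: the argument is essentially an averaging inequality, and the only step that deserves care is noting that the union $\bigcup_{v \in S} N[v]$ covers all of $V(F)$ (including $S$ itself), which is immediate from the definition of a dominating set. No assumption on connectedness or on the absence of isolated vertices is needed, so the inequality holds for every graph $F$.
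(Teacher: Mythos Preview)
Your argument is correct and is the standard proof of this classical bound. Note, however, that the paper does not actually supply a proof of this theorem: it is quoted from \cite{walikar1979recent} as a known result and used as a tool, so there is no ``paper's own proof'' to compare against. Your closed-neighbourhood union bound is exactly the textbook derivation.
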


\begin{thm}[\cite{reed1996paths}]\label{thm:3delta}
 If $F$ is a graph of order $n$ with $\delta \geq 3$, then $\gamma(F) \leq \frac{3}{8}n$.  
\end{thm}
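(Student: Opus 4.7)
The plan is to construct a dominating set of size at most $3n/8$ via a path-partition argument. The starting observation is that for a path $P_k$ on $k$ vertices, $\gamma(P_k) = \lceil k/3 \rceil$, so any partition of $V(F)$ into vertex-disjoint paths $P_1, \dots, P_m$ (a path cover, which trivially exists) yields a dominating set of size $\sum_i \lceil |P_i|/3 \rceil$. The ratio $\lceil k/3 \rceil / k$ is at most $3/8$ for $k \in \{3, 6\}$ and for all $k \geq 8$, but it exceeds $3/8$ for the short lengths $k \in \{1, 2, 4, 5, 7\}$. The heart of the argument is therefore to find a path partition in which short paths of these bad lengths are rare, and to handle the short paths that do occur by exploiting the minimum degree condition.

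Concretely, I would fix a path cover that is extremal in a suitable sense, for instance one that minimises the number of paths (or, more finely, lexicographically minimises the multiset of path lengths under a carefully chosen ordering). Given $\delta \geq 3$, each endpoint of a short path has at least three neighbours, and extremality forces these neighbours to sit in constrained positions inside other paths of the partition: otherwise a local swap (extending, rerouting, or concatenating paths along such a neighbour) would produce a strictly better path cover, contradicting extremality. One can then define the dominating set by taking $\lceil |P_i|/3 \rceil$ suitably placed vertices from each long path $P_i$ and, for each short path, either absorbing it into a neighbouring long path's dominating set or adding one extra vertex whose cost is amortised against the slack available in adjacent long paths.

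The main obstacle will be making the amortised counting work out tightly to $3n/8$. The extremality of the path cover and the charging scheme must be jointly tuned so that every vertex is charged at most $3/8$ in total, and so that pathological configurations (such as many short paths sharing a small common neighbourhood) are ruled out by $\delta \geq 3$ rather than merely inflating the bound. I expect the most delicate cases to be paths of length $1$, $2$, and $7$, which lie furthest from the target ratio and thus demand the full strength of the minimum-degree hypothesis, together with the extremal structural restrictions on the path cover, to be absorbed cleanly.
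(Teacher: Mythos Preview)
The paper does not prove this theorem at all: it is quoted with a citation to Reed~\cite{reed1996paths} and used only as a black box in the proof of Theorem~\ref{thm:3/4domreg}. So there is no in-paper proof to compare against.

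That said, your plan is recognisably Reed's own strategy: partition $V(F)$ into paths, exploit $\gamma(P_k)=\lceil k/3\rceil$, and use $\delta\ge 3$ together with an extremal choice of path cover to control the short paths that overshoot the $3/8$ ratio. What you have written, however, is a description of the shape of the argument rather than an argument. The entire content of Reed's proof lives in exactly the two places you have left as promises: the precise extremality notion for the path cover (Reed does not simply minimise the number of paths; the right invariant is more delicate), and the charging scheme that absorbs the excess from paths of length $1,2,4,5,7$ into nearby long paths. Your proposal asserts that these can be ``jointly tuned'' but gives no mechanism; in particular, you have not shown that a short path's endpoints must have neighbours in positions that create usable slack, nor that distinct short paths cannot all charge the same slack. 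Reed's paper spends most of its length on exactly this case analysis, and the bound $3/8$ is tight, so there is no room for a soft amortisation. As written, the proposal is a correct outline of the known proof but not itself a proof.
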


\begin{thm}[\cite{bujtas2019domination}]\label{thm:5delta}
 If $F$ is a graph of order $n$ with $\delta \geq 5$, then $\gamma(F) \leq \frac{1}{3}n$.  
\end{thm}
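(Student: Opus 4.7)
The plan is to follow the strategy of weighted greedy arguments that yield Theorem~\ref{thm:3delta}. At each stage of the construction of a dominating set $D$, classify every vertex of $F$ as \emph{white} (undominated, outside $D$), \emph{blue} (dominated by $D$, outside $D$), or \emph{black} (in $D$). Assign weights $w_W > w_B > 0$ to whites and blues and $0$ to blacks, and let $\Phi$ be the total weight of $V(F)$. Initially $\Phi = n\, w_W$, and $\Phi = 0$ exactly when $D$ dominates $F$. I would greedily add to $D$ a vertex that maximises the resulting $\Phi$-drop, and the aim is to calibrate $w_W, w_B$ so that at every step we can guarantee a drop of at least $3 w_W$. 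Summing over all steps then forces $|D| \leq n/3$.

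To achieve this per-step bound, the minimum degree hypothesis $\delta \geq 5$ is used as follows. A vertex $v$ added to $D$ contributes a drop of $w_W$ (or $w_B$) from itself, plus $w_W - w_B$ for each white neighbour of $v$ that turns blue, plus $w_B$ for each blue neighbour that becomes irrelevant. With $\delta \geq 5$, a vertex with many white neighbours forces a large drop automatically; whereas if $v$'s neighbours are mostly blue, earlier moves must already have spent budget on them, so a careful amortisation should still yield the required total drop over a bounded window of steps.

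The main technical obstacle will be the set of bad configurations in which no single greedy move achieves the $3w_W$ threshold: typically, these are local configurations where $v$ has only one or two white neighbours, each with few further white neighbours themselves. I expect to handle such obstructions by extending the greedy move to add a pair of vertices simultaneously, or by a finer case analysis on the $2$-neighbourhood structure, amortising the cost across a bounded number of steps. Tuning the weights $w_W, w_B$ so that every local case delivers the required drop, while exploiting $\delta \geq 5$ tightly enough to exclude the $\delta = 4$ pathologies (which only admit the weaker $\tfrac{3}{8}n$ bound of Theorem~\ref{thm:3delta}), is the technical heart of the argument, and the place where I expect most of the combinatorial work to concentrate.
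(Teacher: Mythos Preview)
The paper does not prove this statement: Theorem~\ref{thm:5delta} is quoted from \cite{bujtas2019domination} without proof and used only as a black box in the proof of Theorem~\ref{thm:3/4domreg}. There is therefore no proof in the paper to compare your proposal against.

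Taken on its own terms, what you have written is a plan rather than a proof. You correctly identify the weighted-greedy paradigm, and this is indeed the method behind the cited result. But everything of substance is deferred: you never specify $w_W$ and $w_B$, you carry out no case analysis, and you explicitly label the ``technical heart'' as work still to be done. More concretely, a two-level white/blue weight scheme of the kind you describe is essentially what underlies the $\tfrac{3}{8}n$ bound of Theorem~\ref{thm:3delta}; getting down to $\tfrac{1}{3}n$ under $\delta \ge 5$ in Bujt\'as's argument requires a more finely stratified potential (tracking, roughly, how many white neighbours a white vertex still has) together with a lengthy case analysis of the local configurations that can arise when the greedy step threatens to fall short. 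Your remark that one might ``add a pair of vertices simultaneously'' or do ``a finer case analysis on the $2$-neighbourhood'' points in the right direction, but none of that work is present. So the strategic orientation is sound, yet the proposal as it stands would not close without importing essentially the full content of the cited reference.
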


\begin{thm}[\cite{cockayne1980total, walikar1979recent}]\label{thm:Deltalow}
If $F$ is a connected graph of order $n$, then $\gamma_t(F) \geq \frac{n}{ \Delta}$. 
\end{thm}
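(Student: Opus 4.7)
The plan is to argue by a straightforward covering/counting argument using the definition of a total dominating set. Let $S$ be any total dominating set of $F$ of minimum size $\gamma_t(F)$. By definition, every vertex $v \in V(F)$ is adjacent to some vertex of $S$, i.e.\ $v \in N(s)$ for some $s \in S$, where $N(s)$ denotes the open neighbourhood of $s$. Hence the collection $\{N(s) : s \in S\}$ covers $V(F)$.

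From here the key step is to bound the total size of this covering. Since $|N(s)| = \deg_F(s) \leq \Delta$ for every vertex $s$, I would write
\begin{equation*}
n \;=\; |V(F)| \;\leq\; \left|\bigcup_{s \in S} N(s)\right| \;\leq\; \sum_{s \in S} |N(s)| \;\leq\; |S| \cdot \Delta \;=\; \gamma_t(F) \cdot \Delta,
\end{equation*}
and rearranging gives $\gamma_t(F) \geq n/\Delta$, as desired. The connectivity hypothesis is used only to guarantee that $F$ has no isolated vertex, so that a total dominating set actually exists and the statement is nonvacuous.

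There is not really a serious obstacle here: the argument is essentially a one-line union bound, and the only thing to be careful about is distinguishing open from closed neighbourhoods (one must use open neighbourhoods because a vertex in $S$ needs a neighbour in $S$, not itself). If one wanted to strengthen or discuss sharpness, the natural example to keep in mind is a $\Delta$-regular graph admitting a perfect total dominating structure, but this is not needed for the stated inequality.
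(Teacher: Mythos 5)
Your proof is correct and is the standard union-bound argument for this classical inequality; the paper itself states the result as a citation to the literature without giving a proof, and your argument is exactly the one found in the cited sources. Your remarks are also accurate: one must use open neighbourhoods (since vertices of $S$ also need a neighbour in $S$), and connectivity serves only to exclude isolated vertices so that a total dominating set exists (and to avoid the degenerate case $n=1$, where $\Delta=0$).
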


\begin{thm}[\cite{archdeacon2004some,chvatal1992small,tuza1990covering}]\label{thm:Deltaup}
If $F$ is a graph of order $n$ with $\delta \geq 3$, then $\gamma_t(F) \leq \frac{1}{2}n$. 
\end{thm}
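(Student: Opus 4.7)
The plan is to reformulate total domination as a hypergraph transversal problem and then apply a classical estimate for such transversals. Define the \emph{open-neighborhood hypergraph} $\mathcal{H}(F)$ on vertex set $V(F)$ with hyperedges $\{N_F(u) : u \in V(F)\}$. Then $S \subseteq V(F)$ is a total dominating set of $F$ if and only if $S$ meets every $N_F(u)$, so $\gamma_t(F) = \tau(\mathcal{H}(F))$. The hypothesis $\delta \geq 3$ translates into two structural properties of $\mathcal{H}(F)$: every hyperedge has size at least $3$, and every vertex lies in at least $3$ hyperedges.

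I would then prove (as a special case of a theorem of Tuza and Chv\'atal--McDiarmid) that any hypergraph whose edges and vertex-degrees are all at least $3$ admits a transversal of size at most half its number of vertices. My approach is a swap argument: start with an arbitrary set $A \subseteq V(\mathcal{H})$ of size $\lfloor n/2 \rfloor$, and call a hyperedge \emph{uncovered} if $e \cap A = \emptyset$. While uncovered edges exist, choose $v \in V(\mathcal{H}) \setminus A$ lying in many uncovered edges and $u \in A$ lying in few, and swap them; the size-$\geq 3$ and degree-$\geq 3$ hypotheses, combined with a careful discharging scheme, should guarantee the number of uncovered edges strictly decreases, terminating with $A$ a transversal of size $\lfloor n/2 \rfloor$.

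The main obstacle is the discharging step in the transversal bound. Since extremal configurations show $n/2$ is tight (for instance, when $F$ is a disjoint union of copies of $K_4$, one has $\gamma_t(F) = n/2$), the analysis must be sharp and cannot afford any slack. I would set up a two-sided charging scheme in which uncovered edges distribute charge to their vertices while covered edges absorb surplus, exploiting the edge-size and vertex-degree bounds simultaneously. Once this hypergraph transversal bound is in place, the statement of the theorem follows immediately from the reformulation.
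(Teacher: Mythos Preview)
The paper does not prove this theorem; it is quoted from the literature (Archdeacon et al., Chv\'atal--McDiarmid, Tuza) and used as a black box. Your reformulation via the open-neighborhood hypergraph $\mathcal{H}(F)$ and the identification $\gamma_t(F)=\tau(\mathcal{H}(F))$ is exactly the standard route taken in those references, so at the level of strategy you are aligned with the cited proofs.

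However, there is a genuine gap in the intermediate hypergraph statement you propose to prove. The claim ``any hypergraph whose edges and vertex-degrees are all at least $3$ admits a transversal of size at most $n/2$'' is false: the complete $3$-uniform hypergraph on $5$ vertices has all edges of size $3$ and all vertex degrees equal to $\binom{4}{2}=6$, yet its transversal number is $3>5/2$. Consequently no swap or discharging scheme using only the edge-size and degree hypotheses can succeed. What the cited proofs actually use is the Chv\'atal--McDiarmid bound $\tau(\mathcal{H})\le (|V(\mathcal{H})|+|E(\mathcal{H})|)/4$ for hypergraphs with minimum edge size at least $3$; the open-neighborhood hypergraph has $|E(\mathcal{H}(F))|\le n$ (one hyperedge $N(u)$ per vertex $u$), and this edge count---not the vertex-degree condition---is what yields $\tau\le 2n/4=n/2$. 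Your argument should therefore track the number of hyperedges rather than the vertex degrees, and the proof of the transversal bound itself is a direct extremal/induction argument rather than a local swap procedure.
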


\begin{thm}[\cite{eustis2016independence}]\label{thm:Deltaup5}
If $F$ is a graph of order $n$ with $\delta \geq 5$, then $\gamma_t(F) \leq \frac{2453}{6500}n$. 
\end{thm}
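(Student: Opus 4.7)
The plan is to use the probabilistic method followed by a deterministic cleanup, a strategy that underlies most bounds of this flavor. First I would select each vertex of $F$ independently with probability $p$ to form a random set $A$. Let $B = \{v \in V(F) : N(v) \cap A = \emptyset\}$ be the set of vertices with no neighbor in $A$, and let $A_0 = \{v \in A : N(v) \cap A = \emptyset\}$ be the isolated vertices of $F[A]$. The key observation is that $S = A \cup \{f(v) : v \in B \cup A_0\}$ is a total dominating set whenever $f(v)$ is any chosen neighbor of $v$: every $v \notin S$ lies in neither $B$ nor $A$, so has a neighbor in $A \subseteq S$; every $v \in A \setminus A_0$ already has a neighbor in $A$; and every $v \in A_0$ has $f(v) \in S$ as a neighbor (similarly for $v \in B$).

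Since $\deg(v) \ge 5$ for all $v$, the expected sizes satisfy $\mathbb{E}|A| = np$, $\mathbb{E}|B| \le n(1-p)^5$, and $\mathbb{E}|A_0| \le np(1-p)^5$. A naive union bound $\mathbb{E}|S| \le np + n(1-p)^5 + np(1-p)^5$, optimized over $p$, yields only a constant near $0.465$, well short of $2453/6500 \approx 0.3774$. The second step is therefore the crucial improvement: replace the trivial bound $|\{f(v) : v \in B \cup A_0\}| \le |B| + |A_0|$ by a sharper estimate that exploits the fact that many bad vertices share neighbors, and hence a single fix-up vertex can cover several of them.

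Concretely, I would formulate the cleanup as an auxiliary hypergraph covering problem: place a hyperedge $N(v)$ for every $v \in B \cup A_0$, and seek a small transversal. Because each hyperedge has size $\ge 5$, classical transversal bounds (e.g.\ the Alon--Spencer style analysis or a fractional-relaxation argument) give a saving proportional to $1/\delta$. Combining the two random phases: select $A$ with probability $p_1$, then independently select a second random set $A'$ with probability $p_2$ among candidates for $f$-values, and only fall back on a worst-case transversal for what remains. The expected size becomes a function $\Phi(p_1, p_2)$ that one optimizes analytically; the rational constant $2453/6500$ will emerge from setting the partial derivatives to zero and solving the resulting polynomial system in $p_1, p_2$.

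The main obstacle, and the source of the curious denominator $6500$, is precisely this two-parameter optimization coupled with the need for a tight analysis of the second-phase transversal. A union-bound application in the second phase loses a constant factor; avoiding this loss requires either (i) a Lov\'asz Local Lemma style argument to show that the bad events of being ``doubly uncovered'' are nearly independent, or (ii) a structural dichotomy: if many vertices of $B$ share neighbors, one uses a matching argument, and otherwise the events are suitably spread so that independence-style bounds apply. Pinning down which regime one is in, and optimizing the parameters accordingly, is the technical heart of the proof; everything else is a routine expectation calculation plus a standard ``there exists a realization at least as good as the mean'' conclusion to derandomize.
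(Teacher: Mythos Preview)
The paper does not contain a proof of this theorem: it is quoted from \cite{eustis2016independence} and used only as a black box in the proof of Theorem~\ref{thm:3/4tdomreg}. There is therefore no argument in the paper to compare your proposal against.

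Regarding the proposal on its own terms: the two-phase probabilistic scheme you describe is the right genre for bounds of this kind, and your diagnosis that a single random round with naive cleanup only reaches a constant near $0.465$ is correct. But what you have written is a strategy outline, not a proof. You never commit to a specific second-phase model, leaving it open whether the saving comes from a Lov\'asz Local Lemma argument, a hypergraph transversal bound, or a structural dichotomy; and you assert that $2453/6500$ will ``emerge'' from an unspecified two-parameter optimization without performing it. The actual derivation in the cited source fixes concrete selection probabilities in a multi-round random process and computes the resulting expectation exactly; reproducing the constant requires making those choices and carrying the calculation through, which your proposal stops short of doing.
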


\subsection*{Greedy Algorithm}
There is an intuitive algorithm which constructs a dominating subset of vertices of a graph $F$ whose cardinality differs from $\gamma(F)$ by no more than, roughly, $\log \Delta$.  

Given a graph $F$. Let $S$ be a subset of the vertex set of $F$. We start with $S = \emptyset$. Until $S$ is a dominating set of $F$, add a vertex $v$ with maximum number of undominated neighbours. All vertices of $F$ start  as \emph{white}. Anytime we add a vertex to $S$, we colour that vertex \emph{black}, and colour all dominated vertices \emph{grey}. Note that all undominated vertices remain white.
Let $w(u)$ be the cardinality of the white vertices among the neighbours of $u$, including $u$ itself. This idea leads to a greedy algorithm.


\begin{algorithm}
\caption{Greedy algorithm}\label{alg:cap}
\begin{enumerate}
    \item $S := \emptyset$
    \item \textbf{while} $\exists$ white vertices \textbf{do}
    \item choose $v \in \{x \;|\; w(x) = \mathrm{max}_{u \in V}\{(w(u)\} \}$; 
    \item  $S:= S \cup \{v\}$;
    \item \textbf{end while}
    \end{enumerate}

\end{algorithm}

\begin{thm}[\cite{vazirani2002approximation}]\label{thm:greedyalg}
The greedy algorithm (see Algorithm~\ref{alg:cap})  is a $H(\Delta)$ factor approximation algorithm for the
minimum dominating set problem, where $H(\Delta) = 1 +\frac{1}{2}+\frac{1}{3}+\cdots +\frac{1}{\Delta}$.
\end{thm}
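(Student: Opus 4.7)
The plan is to reduce the minimum dominating set problem to the set cover problem and then apply the classical greedy set cover analysis. For each vertex $v \in V(F)$, the closed neighbourhood $N[v] := N(v) \cup \{v\}$ has cardinality at most $\Delta + 1$, and a subset $S \subseteq V(F)$ is a dominating set of $F$ if and only if the family $\{N[v] : v \in S\}$ covers $V(F)$. Under this correspondence, Algorithm~\ref{alg:cap} is precisely the greedy set cover heuristic: at each step, $w(v)$ counts exactly the number of currently white (uncovered) elements contained in the set $N[v]$, so picking $v$ with maximum $w(v)$ is the same as picking the set covering the most uncovered elements.

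Next, I would use a standard pricing argument. For each $u \in V(F)$, define $p(u) := 1/w(v)$, where $v$ is the vertex added to $S$ at the moment $u$ is first dominated (so $u$ was white and $v$ was the greedy choice). Each greedy choice of $v$ contributes exactly $w(v) \cdot (1/w(v)) = 1$ to the total, so
\begin{equation*}
  |S| \;=\; \sum_{u \in V(F)} p(u).
\end{equation*}

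To bound this sum, fix an optimal dominating set $D^{\ast}$ of $F$, and for each $u \in V(F)$ assign $u$ to some $v^{\ast} \in D^{\ast}$ with $u \in N[v^{\ast}]$. For a fixed $v^{\ast}$, list the vertices assigned to it as $u_1, u_2, \ldots, u_m$ in the order in which they are dominated by the greedy run, where $m \leq |N[v^{\ast}]| \leq \Delta + 1$. Just before $u_i$ is dominated, the closed neighbourhood $N[v^{\ast}]$ still contains at least $m - i + 1$ white vertices; therefore the vertex $v$ chosen to dominate $u_i$ satisfies $w(v) \geq m - i + 1$, whence $p(u_i) \leq 1/(m - i + 1)$. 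Summing gives $\sum_{i=1}^{m} p(u_i) \leq H(m) \leq H(\Delta + 1)$, and summing over $v^{\ast} \in D^{\ast}$ yields $|S| \leq H(\Delta + 1)\,|D^{\ast}| = H(\Delta+1)\,\gamma(F)$.

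The main obstacle is matching the stated factor $H(\Delta)$ rather than the looser $H(\Delta + 1)$ that this argument produces directly. I would close this gap by observing that the first vertex $u_1$ assigned to each $v^{\ast}$ receives a charge of at most $1/(\Delta+1)$, while the remaining at most $\Delta$ vertices contribute at most $H(\Delta)$; a slightly more careful accounting (or applying the analysis only to the $\Delta$ neighbours distinct from $v^{\ast}$ itself, since $v^{\ast}$ is always one of the elements it covers) can be used to refine the bound to $H(\Delta)\,\gamma(F)$, which is the form stated in Vazirani's textbook~\cite{vazirani2002approximation}.
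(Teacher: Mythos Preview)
The paper does not prove this theorem at all: it is quoted from \cite{vazirani2002approximation} as a known result and then applied as a black box in the proof of \cref{thm:Deltalowbd}. There is therefore no proof in the paper to compare your attempt against.

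As a standalone sketch, your reduction to set cover and the pricing/charging argument are exactly the textbook approach and correctly yield the factor $H(\Delta+1)$. Your final paragraph, however, does not actually close the gap to $H(\Delta)$: the observation that $u_1$ is charged at most $1/(\Delta+1)$ does not help, since $1/(\Delta+1)+H(\Delta)=H(\Delta+1)$ is precisely what you already had, and ``applying the analysis only to the $\Delta$ neighbours distinct from $v^{\ast}$'' leaves $v^{\ast}$ itself unaccounted for. If you want to state the sharper constant you would need a genuinely different argument (or simply accept $H(\Delta+1)$, which is what the set-cover reduction naturally gives and which suffices for every application in this paper).
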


\section{Results}

This section is devoted to establishing relations concerning the domination number, the total domination number, and the connected domination number with respect to graph covers. We start with the domination number. 

\begin{thm}\label{thm:domn}
Suppose that $G$ is a $k$-fold cover of $F$. Then $\gamma(G) \leq k\gamma(F)$. Moreover, this bound is tight.
\end{thm}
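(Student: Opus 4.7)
My plan is to lift a minimum dominating set of $F$ to a dominating set of $G$ of the right cardinality, and then exhibit a simple cover that attains equality.

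For the upper bound, let $D \subseteq V(F)$ be a minimum dominating set of $F$, so $|D| = \gamma(F)$, and consider $S := \pi^{-1}(D) \subseteq V(G)$. Since $G$ is a $k$-fold cover of $F$, every fibre has size $k$, so $|S| = k\gamma(F)$; hence it suffices to show that $S$ dominates $G$. Take any $v \in V(G) \setminus S$. Then $\pi(v) \notin D$, so by the definition of a dominating set there exists $u \in D$ with $u\pi(v) \in E(F)$. By the defining property of a covering projection, $\pi$ restricts to a bijection from the neighbours of $v$ in $G$ onto the neighbours of $\pi(v)$ in $F$, so there is some neighbour $v' $ of $v$ in $G$ with $\pi(v') = u \in D$, i.e.\ $v' \in S$. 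Thus every vertex outside $S$ has a neighbour in $S$, so $S$ is a dominating set of $G$, and $\gamma(G) \leq |S| = k\gamma(F)$.

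For tightness I would take the trivial example of the disjoint union: let $G$ consist of $k$ vertex-disjoint copies of $F$, with $\pi$ identifying corresponding vertices. This is clearly a $k$-fold cover. Since the $k$ components are pairwise disconnected, any dominating set of $G$ must dominate each component separately, so $\gamma(G) = k\gamma(F)$. (Alternatively, the identity projection with $k=1$ already demonstrates tightness trivially.)

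No single step looks to be a real obstacle here; the only subtlety is to state clearly why the bijection-on-neighbourhoods property (not merely surjectivity of $\pi$) is what lets us pull a dominator back into the fibre containing $v$, which I have isolated in the paragraph above. I would keep the proof to a short paragraph plus one sentence verifying tightness.
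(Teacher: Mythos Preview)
Your proof is correct and the upper-bound argument is essentially identical to the paper's (lift a minimum dominating set through $\pi^{-1}$ and use the neighbourhood bijection to see that every unlifted vertex is dominated). The only difference is in the tightness witness: you use the disjoint union of $k$ copies of $F$, which is exactly what the paper invokes later for the total-domination analogue, whereas for this theorem the paper instead takes an $r$-regular graph $F$ with an efficient dominating set (so $\gamma(F)=n/(1+r)$) and observes that the lift is again efficient in any cover; this has the mild advantage of certifying tightness even among connected covers, but your example is perfectly valid for the statement as written.
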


\begin{proof}
Suppose that $\gamma(F) = t$. Let $\pi$ be the projection map, and let $S = \{v_1, v_2,\dots,v_t \}$ be a minimal dominating set of $F$ such that $|S| = t$. We define the set $S' = \{\pi^{-1}(v)\; |\; v \in S \}$. We show that $S'$ is a dominating set. It is sufficient to show that $v' \in G- S'$ has at least one neighbour in $S'$. Suppose for a contradiction that there is $v' \in G- S'$ such that $v'$ has no neighbour in $S'$. Then $\pi(v')$ has no neighbour in $S$, but $S$ was chosen to be a dominating set with the smallest cardinality; a contradiction. Hence $v'$ has a neighbour in $S'$ and furthermore, $S'$ is a dominating set. It follows that $\gamma(G) \leq |S'| = nt$.

The bound is tight for $r$-regular graphs $F$ with an efficient dominating set, i.e.~$\gamma(F)=n/(1+r)$ where $n$ is the order of $F$. Then the lift of this dominating set to any cover $G$ of $F$ is also an efficient dominating set, and $\gamma(G)=nk/(1+r)$.
\end{proof}

A natural question that arises is: can we find a lower bound for $\gamma(G)$ in terms of $k$ and $\gamma(F)$? 

We make the following observations.
\begin{observation}\label{obs:dom}
~
\begin{enumerate}[label=(\roman*),ref=\roman*]
    \item\label{obs:k-bigger} If a graph $G$ is a $k$-fold cover of a graph $F$, then $\gamma(G) \geq k$ by Theorem~\ref{thm:deltalow}.
    \item\label{obs:g-bigger}  If a graph $G$ is a $k$-fold cover of a graph $F$, then $\gamma(G) \geq \gamma(F)$.
\end{enumerate}    
\end{observation}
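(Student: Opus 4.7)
For part~\eqref{obs:k-bigger}, the plan is to apply Theorem~\ref{thm:deltalow} directly to $G$. Since $G$ is a $k$-fold cover of $F$, we have $|V(G)| = k\,|V(F)|$, and by Proposition~\ref{prop:pertiesgc}\eqref{it:dgpreserve} the maximum degree of $G$ equals $\Delta := \Delta(F)$. Thus
\[
\gamma(G) \;\ge\; \frac{|V(G)|}{1+\Delta} \;=\; \frac{k\,|V(F)|}{1+\Delta}.
\]
To finish, I would note the elementary fact that every graph with maximum degree $\Delta$ has at least $\Delta+1$ vertices (a vertex of degree $\Delta$ together with its $\Delta$ distinct neighbours), so $|V(F)|\ge 1+\Delta$ and the bound collapses to $\gamma(G)\ge k$. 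This part is essentially a one-line calculation; no real obstacle.

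For part~\eqref{obs:g-bigger}, the natural strategy is to push a minimum dominating set of $G$ down to $F$ along $\pi$ and verify it still dominates. Specifically, let $S'\subseteq V(G)$ be a minimum dominating set, so $|S'|=\gamma(G)$, and set $S := \pi(S')\subseteq V(F)$. I would then show $S$ is a dominating set of $F$ as follows: take any $u\in V(F)\setminus S$, and pick any preimage $v\in\pi^{-1}(u)$. Since $\pi(v)=u\notin S=\pi(S')$, we have $v\notin S'$, so by the dominating property there exists $w\in S'$ with $vw\in E(G)$. Because $\pi$ is a covering projection, $\pi(w)$ is a neighbour of $\pi(v)=u$ in $F$, and $\pi(w)\in S$. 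Hence $u$ is dominated by $S$, and we conclude
\[
\gamma(F) \;\le\; |S| \;=\; |\pi(S')| \;\le\; |S'| \;=\; \gamma(G).
\]

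Both parts are quite short, and the only point that requires any care is invoking the neighbour-preserving (in fact, neighbour-bijective) property of the covering projection in the second part — one must not accidentally need the converse direction of the bijection. No step looks like a real obstacle; part~\eqref{obs:g-bigger} is arguably the ``content'' of the observation, but it is essentially immediate once one writes it out.
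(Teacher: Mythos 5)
Your proposal is correct and follows the same route the paper intends: part~(i) is exactly the application of Theorem~\ref{thm:deltalow} to $G$ (using $|V(G)|=k|V(F)|$ and $|V(F)|\ge 1+\Delta$), and part~(ii) is the standard push-forward of a minimum dominating set along $\pi$, which is the implicit argument behind the paper's unproved observation. Both steps check out.
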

   
The following is an immediate consequence of Observation~\ref{obs:dom}.

\begin{remark}\label{prop:1stlowbd}
Suppose that a graph $G$ is a $k$-fold cover of a graph $F$. Then $\gamma(G) \geq \sqrt{k\gamma(F)}$. 
\end{remark}


The bound in Remark~\ref{prop:1stlowbd} is not strong. 
In the following, we shall prove a stronger lower bound, which depends on $\Delta$. 

\begin{thm}\label{thm:Deltalowbd}
Suppose that a graph $G$ is a $k$-fold cover of a graph $F$. Then $\gamma(G) \geq \frac{1}{H(\Delta)}k\gamma(F)$,  where $H(\Delta) = 1 +\frac{1}{2}+\frac{1}{3}+\cdots +\frac{1}{\Delta}$. 
\end{thm}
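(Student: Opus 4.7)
The plan is to go through the fractional relaxation of the dominating set problem. For any graph $H$, let $\gamma^*(H)$ denote the LP-minimum of $\sum_{v \in V(H)} x_v$ over $x \ge 0$ satisfying $\sum_{u \in N_H[v]} x_u \ge 1$ for every $v \in V(H)$; trivially $\gamma(H) \ge \gamma^*(H)$. I would combine two ingredients: (a) the fibre inequality $\gamma^*(G) \ge k\,\gamma^*(F)$, and (b) the LP integrality-gap bound $\gamma(F) \le H(\Delta)\,\gamma^*(F)$, which is exactly what the standard dual-fitting analysis underlying Theorem~\ref{thm:greedyalg} (see \cite{vazirani2002approximation}) actually delivers. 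Chaining them gives $\gamma(G) \ge \gamma^*(G) \ge k\,\gamma^*(F) \ge k\gamma(F)/H(\Delta)$, as required.

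For (a) the plan is a fibre-averaging construction. Given any feasible $y$ for the LP on $G$, set $x_v := \frac{1}{k}\sum_{u \in \pi^{-1}(v)} y_u$ for $v \in V(F)$. The core step is the partition lemma: for each $v \in V(F)$, the closed neighbourhoods $\{N_G[v'] : v' \in \pi^{-1}(v)\}$ partition $\pi^{-1}(N_F[v])$. Disjointness uses that $F$ has no loop at $v$ (so no two fibre-mates of $v$ are adjacent in $G$) together with $\pi$ being a bijection on open neighbourhoods (so no vertex outside the fibre of $v$ is adjacent to two of its fibre-mates); the covering direction is immediate from the local bijection of $\pi$. Using the partition, $\sum_{w \in N_F[v]} x_w = \frac{1}{k}\sum_{v' \in \pi^{-1}(v)}\sum_{u \in N_G[v']} y_u \ge \frac{1}{k}\cdot k = 1$, so $x$ is LP-feasible for $F$ with total weight $\frac{1}{k}\sum_u y_u$. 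Taking $y$ optimal yields $\gamma^*(F) \le \gamma^*(G)/k$, which is (a).

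The main obstacle I anticipate is justifying (b) directly from the statement of Theorem~\ref{thm:greedyalg}, which advertises only $|D_{\text{greedy}}| \le H(\Delta)\gamma(F)$, whereas the argument really needs the LP form $|D_{\text{greedy}}| \le H(\Delta)\gamma^*(F)$. Happily, the standard greedy / dual-fitting proof in \cite{vazirani2002approximation} produces a feasible fractional dual packing of weight $|D_{\text{greedy}}|/H(\Delta)$, which by LP duality is itself a lower bound on $\gamma^*(F)$; so the stronger LP version falls out of the same proof and only needs to be recorded. A secondary technicality is the simple-graph hypothesis used in the partition lemma, but that is assumed throughout the paper.
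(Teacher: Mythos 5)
Your proof is correct, but it takes a genuinely different route from the paper. The paper runs the greedy algorithm on $F$ to obtain a set $S$ with $\gamma(F)\le|S|$, and then argues combinatorially that the lift $\pi^{-1}(S)$ is a possible output of the greedy algorithm run on $G$ (using that the fibre of each chosen vertex has pairwise disjoint closed neighbourhoods and that the white-counts $w$ are preserved under $\pi$ throughout the run); applying Theorem~\ref{thm:greedyalg} to $G$ as a black box then gives $k\gamma(F)\le|\pi^{-1}(S)|\le H(\Delta)\gamma(G)$. You instead pass to the fractional relaxation: your fibre-partition lemma is correct (disjointness from simplicity of $F$ plus local injectivity of $\pi$, covering from local surjectivity), and in fact the same construction run in the other direction (lifting a feasible $x$ on $F$ to $y_{u}:=x_{\pi(u)}$) shows the clean identity $\gamma^*(G)=k\,\gamma^*(F)$, after which the integrality-gap form of the greedy analysis applied to $F$ finishes the job. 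What your approach buys is rigour and modularity: it sidesteps the ``could be the set chosen by the greedy algorithm'' simulation, which is the least formal step of the paper's argument. What it costs is that Theorem~\ref{thm:greedyalg} can no longer be cited as a black box: you need $\gamma(F)\le H(\Delta)\gamma^*(F)$, and the standard dual-fitting proof yields the harmonic number of the largest set size, which for closed neighbourhoods is $\Delta+1$; so unless the refinement behind the stated $H(\Delta)$ also bounds the LP optimum, your chain strictly delivers $\gamma(G)\ge k\gamma(F)/H(\Delta+1)$. This is only a constant-level caveat (and one arguably already latent in the paper's use of Theorem~\ref{thm:greedyalg}), but you should record which constant the citation actually supports if you present the LP version.
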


\begin{proof}
 Let $S = \{v_1, v_2, \dots, v_\ell\}$ be a set of vertices in a dominating set of $F$ chosen by the Greedy algorithm, where $v_i$ was chosen before $v_{i+1}$, for $i= 1, 2, \cdots, \ell-1$. Then $\gamma(F)\leq |S|$, so $k\gamma(F)\leq k|S|=|\pi^{-1}(S)|$, (where $\pi$ is the projection map for the cover).

We will show that the lift of $S$ into $G$ could be the set chosen by the greedy algorithm.  Let $S' = \emptyset$ be a subset of the vertex set of $G$. Note that all the vertices of $G$ are white. Since $v_1 \in S$ was chosen first among all the vertices of $F$, 
the greedy algorithm could choose vertices of $G$ that are in $\pi^{-1}(v_1)$ because each vertex in $\pi^{-1}(v_1)$ together with their neighbours form a collection of disjoint stars in $G$. 
Since there are $k$ vertices in $\pi^{-1}(v_1)$, the greedy algorithm could add all the $k$ preimages $\pi^{-1}(v_1)$ of $v_1$ to $S'$ . So, $S':= S' \cup \{v'_{11}, v'_{12}, \dots,v'_{1k} \}$, where $v'_{1i}$, for $i = 1, 2, \dots, k$, are vertices in $\pi^{-1}(v_1)$. Note that $\pi$ maps the neighbours of $v'_{1i}$, (for $i = 1, 2, \dots, k$) bijectively onto the neighbours of $v_1$. So, $w(u)$ maintains the same value as $w(\pi^{-1}(u))$, for all $u \in S-  v_1$.

Now, the next vertex in $S$ is $v_2$. Again, the greedy algorithm could choose vertices of $G$ that are in $\pi^{-1}(v_2)$. Since there are $k$ preimages $\pi^{-1}(v_2)$ of $v_2$, $S'$ becomes $S' = S'  \cup \{v'_{11}, v'_{12}, \dots,v'_{1k} \} \cup \{v'_{21}, v'_{22}, \dots,v'_{2k}\}$,  where $v'_{1i}$ and $v'_{2,i}$ are vertices in $\pi^{-1}(v_1)$ and $\pi^{-1}(v_2)$, respectively, for $i = 1, 2, \dots, k$. By repeating this process for the sequence of vertices in $S$, it follows that the lift of $S$ into $G$ could be a chosen set by the greedy algorithm for $G$. Thus, from Theorem~\ref{thm:greedyalg} we have $|\pi^{-1}(S)| \leq \gamma(G)H(\Delta)$ and so, $k\gamma(F) \leq \gamma(G)H(\Delta)$.  
This proves the theorem.   
\end{proof}

The following result is a consequence of Theorem~\ref{thm:Deltalowbd}.

\begin{cor}\label{conr:cycle}
 Let a graph $G$ be a $k$-fold cover of $C_n$, a cycle on $n$ vertices. Then $\gamma(G) \geq \frac{2}{3}k\gamma(C_n)$.   
\end{cor}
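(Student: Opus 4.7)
The plan is to apply Theorem~\ref{thm:Deltalowbd} directly with $F = C_n$. The only thing to verify is the value of $\Delta$ to plug into $H(\Delta)$. Since every vertex of $C_n$ has degree $2$, Proposition~\ref{prop:pertiesgc}\eqref{it:dgpreserve} tells us that every vertex of $G$ also has degree $2$, so $\Delta = 2$ (and incidentally $G$ is a disjoint union of cycles, by Proposition~\ref{prop:pertiesgc}\eqref{it:liftcycles}).

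With $\Delta = 2$ we have $H(\Delta) = 1 + \tfrac{1}{2} = \tfrac{3}{2}$. Substituting into the conclusion of Theorem~\ref{thm:Deltalowbd} yields
\[
\gamma(G) \;\geq\; \frac{1}{H(\Delta)}\, k\gamma(C_n) \;=\; \frac{2}{3}\, k\gamma(C_n),
\]
which is exactly the claimed bound. There is no real obstacle here: the corollary is a one-line specialisation of the preceding theorem, and the only content is the computation $H(2) = 3/2$ together with the fact that degrees are preserved under covers.
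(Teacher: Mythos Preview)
Your proof is correct and matches the paper's own argument exactly: both simply note that $H(2)=\tfrac{3}{2}$ and invoke Theorem~\ref{thm:Deltalowbd}. Your added remark that $\Delta=2$ follows from Proposition~\ref{prop:pertiesgc}\eqref{it:dgpreserve} is a harmless (and arguably helpful) elaboration of what the paper leaves implicit.
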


\begin{proof}
    Note that $H(2) = \frac{3}{2}$, and the result follows from Theorem~\ref{thm:Deltalowbd}.
\end{proof}

For $r$-regular graphs with $3 \leq r \leq 5$, we have the following lower bound. 

\begin{thm}\label{thm:3/4domreg}
Suppose that a graph $G$ is a $k$-fold cover of a grap$F$. Then 

\begin{enumerate}[label=\emph{(\alph*)},ref=\alph*]
    \item \label{it:3dom}
    $\gamma(G) \geq \frac{3}{5}k\gamma(F)$ if $F$ is a $3$-regular graph.
\\
    \item \label{it:45dom}
    $\gamma(G) \geq \frac{1}{2}k\gamma(F)$ if $F$ is a $4$ or $5$-regular graph.
\end{enumerate}

\end{thm}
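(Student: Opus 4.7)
The plan is to combine the degree-dependent lower bound for the domination number (Theorem~\ref{thm:deltalow}) applied to the cover graph $G$ with the degree-dependent upper bound for the domination number (Theorem~\ref{thm:3delta} or Theorem~\ref{thm:5delta}) applied to the base graph $F$. The key observation is that covers preserve degree (Proposition~\ref{prop:pertiesgc}\eqref{it:dgpreserve}), so if $F$ is $r$-regular of order $n$, then $G$ is $r$-regular of order $kn$; both families of theorems therefore apply with the same value of $\Delta$ and $\delta$.

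For part~\eqref{it:3dom}, I would let $n=|V(F)|$ so that $|V(G)|=kn$ and $\Delta(G)=3$. Theorem~\ref{thm:3delta} applied to $F$ gives $\gamma(F) \leq \tfrac{3}{8}n$, i.e.\ $n \geq \tfrac{8}{3}\gamma(F)$. Theorem~\ref{thm:deltalow} applied to $G$ gives $\gamma(G) \geq \tfrac{kn}{1+3} = \tfrac{kn}{4}$. Chaining these bounds yields
\begin{equation*}
\gamma(G) \;\geq\; \frac{kn}{4} \;\geq\; \frac{k}{4}\cdot\frac{8}{3}\gamma(F) \;=\; \frac{2}{3}k\gamma(F) \;\geq\; \frac{3}{5}k\gamma(F),
\end{equation*}
which is exactly the desired inequality (in fact a little stronger).

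For part~\eqref{it:45dom}, I would split into the two regularity cases. If $F$ is $4$-regular, Theorem~\ref{thm:3delta} still applies (since $\delta=4\geq 3$) and gives $n \geq \tfrac{8}{3}\gamma(F)$; combined with $\gamma(G) \geq \tfrac{kn}{5}$ from Theorem~\ref{thm:deltalow}, this yields $\gamma(G) \geq \tfrac{8}{15}k\gamma(F) \geq \tfrac{1}{2}k\gamma(F)$. If $F$ is $5$-regular, Theorem~\ref{thm:5delta} gives the sharper bound $\gamma(F) \leq \tfrac{1}{3}n$, i.e.\ $n \geq 3\gamma(F)$; together with $\gamma(G) \geq \tfrac{kn}{6}$ this yields $\gamma(G) \geq \tfrac{k}{6}\cdot 3\gamma(F) = \tfrac{1}{2}k\gamma(F)$.

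I do not expect a serious obstacle here, since the proof is essentially just a clean two-step chaining of existing results once one notices that regularity (and hence $\Delta$, $\delta$, and order scaling) is preserved by covers. The main thing to be careful about is applying each named theorem in the correct direction (upper bound on $\gamma(F)$ to deduce a lower bound on $n$, then lower bound on $\gamma(G)$ from $n_G=kn$), and choosing the right theorem among Theorem~\ref{thm:3delta} and Theorem~\ref{thm:5delta} according to the minimum degree available.
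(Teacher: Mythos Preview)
Your proposal is correct and follows essentially the same approach as the paper: both apply Theorem~\ref{thm:deltalow} to $G$ and Theorems~\ref{thm:3delta}/\ref{thm:5delta} to $F$, using that regularity and vertex count scale appropriately under a $k$-fold cover. The only cosmetic difference is that the paper phrases part~\eqref{it:3dom} as a contradiction ($\tfrac{1}{4}kn \leq \gamma(G) < \tfrac{3}{5}k\gamma(F) \leq \tfrac{9}{40}kn$), whereas you chain the inequalities directly and in the process observe the slightly sharper constant $\tfrac{2}{3}$ in the cubic case.
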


\begin{proof}
For assertion (\ref{it:3dom}), suppose for a contradiction that $\gamma(G) < \frac{3}{5}k\gamma(F)$. 
Let $F$ be a $3$-regular graph on $n$ vertices. Note that $G$ has $kn$ vertices. Then by applying Theorems~\ref{thm:deltalow} and~\ref{thm:3delta} 
   we have
\begin{equation*}
  \frac{1}{4}kn \leq \gamma(G)< \frac{3}{5}k\gamma(F) \leq  \frac{9}{40}kn, 
\end{equation*}
which leads to a contradiction.

For assertion (\ref{it:45dom}), the proof is similar to the proof for assertion (\ref{it:3dom}) by using Theorems~\ref{thm:deltalow}, ~\ref{thm:3delta} and~\ref{thm:5delta}.
\end{proof}

For $r = 3,4$ and $5$, Theorem~\ref{thm:Deltalowbd} gives the following leading coefficients $\frac{6}{11}, \frac{12}{25}$ and $\frac{60}{137}$, respectively. 
So, Theorem~\ref{thm:3/4domreg} gives better leading coefficients than Theorem~\ref{thm:Deltalowbd} when $r =3, 4,5$. Moreover, we do not have examples to illustrate that the bounds in Theorems~\ref{thm:Deltalowbd} and~\ref{thm:3/4domreg} are tight. Therefore, we think that the bound could be improved generally. 

\begin{figure}[h]
   \centering
\begin{tikzpicture}[scale=1.25]

 \draw[ thick ] (0,0)--(4,0);
  \draw[ thick ] (1,1)--(1,-3);
   \draw[ thick ] (2,1)--(2,-3);
 \draw[ thick ] (3,1)--(3,-3);
  \draw[ thick ] (0,-1)--(4,-1);
 \draw[ thick ] (0,-2)--(4,-2);
 
\draw[thick, fill=black] (2,-1) circle (2pt);
\draw[thick, fill=black] (1,0) circle (2pt);
\draw[thick, fill=black] (3,0) circle (2pt);

\filldraw[thick, fill=white] (2,0) circle (2pt);
\filldraw[thick, fill=white] (1,-1) circle (2pt);
\filldraw[thick, fill=white] (3,-1) circle (2pt);
\filldraw[thick, fill=white] (1,-2) circle (2pt);
\filldraw[thick, fill=white] (2,-2) circle (2pt);
\filldraw[thick, fill=white] (3,-2) circle (2pt);

\end{tikzpicture}
   \caption{The graph $N$.}
  \label{fig:domsharp}
\end{figure}

\begin{example}\label{exa:conjecture}
In Figure~\ref{fig:domsharp}, graph $N$ is the Cartesian product of $C_3$ and $C_3$, where the black vertices are dominating vertices. So $\gamma(N) \leq 3$. Note that $N$ is $4$-regular. So, we can choose any vertex to be in the dominating set and that vertex dominates four other vertices. Now, the remaining four undominated vertices lie on a rectangle in $N$. So we need at least two additional vertices to dominate all of them. Hence, $\gamma(N) = 3$. 
Let a graph $G$ be the Cartesian product of $C_{15}$ and $C_{15}$. Clearly, $G$ is a cover of $N$ with fold $25$. We construct a dominating set $S$ of $G$ as follows. Choose any vertex, say $v$, of $G$ as the starting point. Then $v$ dominates four other vertices. We translate $v$ both vertically and horizontally with a period of $5$ \cite{golomb1970perfect}. We iterate this process for every new vertex we select. Since $G$ is $4$-regular and every vertex lies on $C_{15}$ , we select three vertices on every cycle of length $15$. So, every vertex in $G$ is dominated by exactly one vertex of $S$, and therefore, $S$ is an efficient dominating set. Thus, $\gamma(G) = \frac{15^{2}}{5} = 45$ and moreover, $\gamma(G) = \frac{3}{5}k\gamma(N)$.
\end{example}


Based on our observations, we propose the following conjecture.

\begin{conj}\label{conj:lowbd}

There exists a constant $c>0$ such that for every $k$-fold cover $G$ of a graph $F$ we have $\gamma(G) \geq c k\gamma(F)$.

\end{conj}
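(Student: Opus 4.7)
The plan is to combine Theorem~\ref{thm:Deltalowbd} with the fractional domination number $\gamma^*$. Theorem~\ref{thm:Deltalowbd} already yields the conjecture with $c = 1/H(\Delta_0)$ whenever $\Delta \le \Delta_0$ for a fixed $\Delta_0$, and Observation~\ref{obs:dom}(ii) gives $c = 1/k_0$ when $k \le k_0$; the real difficulty is therefore the regime where both $\Delta$ and $k$ are unbounded, and this is where a new tool is required.

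The key new ingredient is the identity $\gamma^*(G) = k\gamma^*(F)$ for any $k$-fold cover $\pi\colon G \to F$. The direction $\gamma^*(G) \le k\gamma^*(F)$ is immediate by lifting any fractional dominator $x$ of $F$ via $x'(v') := x(\pi(v'))$. For the reverse, take any fractional dominator $x'$ of $G$ and define the fiber average $x(v) := \tfrac{1}{k}\sum_{v' \in \pi^{-1}(v)} x'(v')$; using that $\pi$ restricts to a perfect matching between each pair of adjacent fibers, summing the feasibility constraint $\sum_{u' \in N[v']} x'(u') \ge 1$ over the $k$ lifts of $v$ telescopes the double sum to $k\sum_{u \in N[v]} x(u) \ge k$, so $x$ is feasible on $F$ with total weight $\gamma^*(G)/k$. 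Since $\gamma(G) \ge \gamma^*(G)$, this unconditionally gives $\gamma(G) \ge k\gamma^*(F)$.

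The remaining step, showing $\gamma^*(F) \ge c\gamma(F)$ for an absolute $c > 0$, is the main obstacle: the integrality gap of the dominating-set LP can reach $\Theta(\log \Delta)$, so the inequality is false for general $F$ and the proof must exploit more than just the fractional value. My plan is to work with the integer vector $(a_v)_{v \in V(F)}$, where $a_v := |\pi^{-1}(v) \cap D| \in \{0, 1, \ldots, k\}$ for a minimum dominating set $D$ of $G$; the same matching argument yields $\sum_{u \in N[v]} a_u \ge k$ for every $v$. Hence $\gamma(G) = \sum_v a_v$ is bounded below by the ``level-$k$'' integer covering number $\gamma_k(F) := \min\bigl\{\sum_v a_v : a_v \in \mathbb{Z}_{\ge 0},\ \sum_{u \in N[v]} a_u \ge k\bigr\}$, and the conjecture reduces to $\gamma_k(F) \ge c\, k\gamma(F)$. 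Direct computation gives $\gamma_k(C_n) \ge \tfrac{2}{3} k\gamma(C_n)$ (matching Corollary~\ref{conr:cycle}), and the construction in Example~\ref{exa:conjecture} saturates the same ratio. The crux will be an LP-rounding or structural argument showing that the integrality gap of the level-$k$ LP stays bounded independently of $\Delta$, even though that of the standard dominating-set LP does not; I would attack this first for the classes (regular graphs, Cayley graphs) where the base graph has enough symmetry to force the integer optimum close to the LP optimum, and then attempt a general reduction.
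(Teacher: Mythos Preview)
First, note that in the paper this statement is a \emph{conjecture}: the paper offers no proof, only the evidence of \cref{thm:Deltalowbd}, \cref{thm:3/4domreg}, and \cref{exa:conjecture}. So there is no ``paper's proof'' to compare against; what you have written is a proposed attack on an open problem.

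Your identity $\gamma^*(G)=k\,\gamma^*(F)$ is correct and is a genuinely new observation relative to the paper; the fiber-averaging argument is clean, and the consequence $\gamma(G)\ge k\,\gamma^*(F)$ is already stronger in spirit than \cref{thm:Deltalowbd} (it recovers it via $\gamma(F)\le H(\Delta)\,\gamma^*(F)$). The reduction $\gamma(G)\ge \gamma_k(F)$ via the integer profile $(a_v)$ is also valid.

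The plan breaks, however, at exactly the step you flag as the crux. The inequality $\gamma_k(F)\ge c\,k\,\gamma(F)$ with an absolute constant $c>0$ is \emph{false}. For any fixed $F$, rounding an optimal fractional dominator $x^*$ to $a_v=\lceil kx^*_v\rceil$ shows
\[
k\,\gamma^*(F)\ \le\ \gamma_k(F)\ \le\ k\,\gamma^*(F)+|V(F)|,
\]
so $\gamma_k(F)/k\to\gamma^*(F)$ as $k\to\infty$. Hence $\gamma_k(F)/(k\,\gamma(F))\to\gamma^*(F)/\gamma(F)$, and by choosing $F$ in a family where this ratio is $\Theta(1/\log\Delta)$ (the standard set-cover gap instances) and then $k$ large, the ratio $\gamma_k(F)/(k\,\gamma(F))$ can be driven to $0$. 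In other words, the ``level-$k$'' integer program forgets too much: once you pass to the profile $(a_v)$ you retain no more information than the LP, and you are back to the $1/H(\Delta)$ bound of \cref{thm:Deltalowbd}. Any successful approach must use additional structure of the vectors $(a_v)$ that actually arise from covers (for instance, that the fibers and the matchings between adjacent fibers come from a single global graph $G$), not merely the closed-neighbourhood sum constraints. Conversely, your argument shows that if the conjecture is true, the cover structure must be doing real work beyond what the profile records; and if one could realise, for each $F$ and large $k$, a $k$-fold cover whose optimal dominating set has profile close to $k x^*$, that would \emph{disprove} the conjecture.
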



From Example~\ref{exa:conjecture} the constant in Conjecture~\ref{conj:lowbd} is $c \leq \frac{3}{5}$. Moreover, we believe that $c=\frac{3}{5}$.

We now turn our attention to the total domination number. We prove a result analogous to Theorem~\ref{thm:domn}.

\begin{thm}\label{thm:tdomn}
Let $F$ be a graph without isolated vertices. Suppose that $G$ is a $k$-fold cover of $F$. Then $\gamma_t(G) \leq k\gamma_t(F)$. Also, this bound is tight.
\end{thm}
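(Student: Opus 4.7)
The plan is to mirror the argument used for Theorem~\ref{thm:domn}. Let $\pi:V(G)\to V(F)$ be the covering projection, take a minimum total dominating set $S=\{v_1,\dots,v_t\}$ of $F$ with $t=\gamma_t(F)$, and define its full lift $S':=\pi^{-1}(S)$. Since $|\pi^{-1}(v)|=k$ for every $v\in V(F)$, we have $|S'|=kt=k\gamma_t(F)$, so the theorem reduces to showing that $S'$ is a total dominating set of $G$.

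The central verification runs as follows. Fix an arbitrary $v'\in V(G)$; because $S$ totally dominates $F$, the vertex $\pi(v')$ has some neighbour $u\in S$. By the defining property of a covering projection, $\pi$ restricts to a bijection from $N_G(v')$ onto $N_F(\pi(v'))$, so there exists $u'\in N_G(v')$ with $\pi(u')=u\in S$, i.e.\ $u'\in S'$. Hence $v'$ has a neighbour in $S'$, as required. The only point where this argument substantively differs from the proof of Theorem~\ref{thm:domn} is that we must verify the adjacency condition for \emph{every} vertex of $G$, including those already in $S'$; the bijective-neighbourhood property of $\pi$ handles set members and non-members in precisely the same way, so this poses no real obstacle.

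For tightness, the cleanest witness is the disconnected cover: take $F$ to be any connected graph without isolated vertices and let $G$ be the disjoint union of $k$ isomorphic copies of $F$, with $\pi$ acting as the identity on each copy. This is a legitimate $k$-fold cover of $F$, and since any total dominating set of $G$ must restrict to a total dominating set on every component, we get $\gamma_t(G)=k\gamma_t(F)$. One may alternatively note the connected example already on hand: the dodecahedron is a double cover of the Petersen graph, and $\gamma_t(\text{Dodecahedron})=8=2\gamma_t(\text{Petersen graph})$, as recorded earlier in the paper.
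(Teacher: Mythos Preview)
Your proof is correct and follows exactly the approach the paper intends: lift a minimum total dominating set of $F$ via $\pi^{-1}$ and check that it totally dominates $G$, with tightness witnessed by the disjoint union of $k$ copies of $F$. Your explicit remark that the adjacency condition must be verified for vertices inside $S'$ as well is precisely the point distinguishing this from the ordinary domination case, and the extra dodecahedron/Petersen example is a nice connected witness beyond what the paper records.
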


\begin{proof}
The proof of the inequality is similar to the proof for Theorem~\ref{thm:domn}. The bound is trivially tight if we take $G$ to be a disconnected union of $k$ copies of $F$.
\end{proof}



    

We make the following observations, which will be used to prove a general lower bound for $\gamma_t(G)$.

\begin{observation}\label{obs:tdom}
~
 \begin{enumerate}[label=(\roman*),ref=\roman*]
    \item\label{obs:k-tbigger} If a graph $G$ is a $k$-fold cover of graph $F$ without isolated vertices, then $\gamma_t(G) \geq k$ from Theorem~\ref{thm:Deltalow}.
    \item\label{obs:g-tbigger}  If a graph $G$ is a $k$-fold cover of graph $F$  without isolated vertices, then $\gamma_t(G) \geq \gamma_t(F)$. 
\end{enumerate}   
\end{observation}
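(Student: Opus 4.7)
The plan is to handle the two parts of the observation separately; each is a short consequence of a result already in the paper.

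For part~(\ref{obs:k-tbigger}), I would apply Theorem~\ref{thm:Deltalow} directly to $G$. First I should check that $\gamma_t(G)$ is even defined: since $F$ has no isolated vertices and $\deg_G(v)=\deg_F(\pi(v))$ by Proposition~\ref{prop:pertiesgc}\eqref{it:dgpreserve}, the cover $G$ also has no isolated vertex. Because $|V(G)|=k|V(F)|$ and $\Delta(G)=\Delta(F)\leq |V(F)|-1$, the theorem yields
\[
\gamma_t(G) \geq \frac{|V(G)|}{\Delta(G)} = \frac{k|V(F)|}{\Delta(F)} \geq \frac{k|V(F)|}{|V(F)|-1} > k,
\]
and integrality of $\gamma_t(G)$ finishes the argument.

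For part~(\ref{obs:g-tbigger}), the idea is to push a minimum total dominating set of $G$ down to $F$ via the projection. Let $S$ be a minimum total dominating set of $G$; I claim $\pi(S)$ is a total dominating set of $F$. Indeed, given any $v\in V(F)$, choose some preimage $u \in \pi^{-1}(v)$; total domination in $G$ yields a neighbour $u' \in S$ of $u$, and by the covering property $\pi(u')\in \pi(S)$ is a neighbour of $\pi(u)=v$ in $F$. Thus $\gamma_t(F) \leq |\pi(S)| \leq |S| = \gamma_t(G)$.

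The only wrinkle I anticipate is that Theorem~\ref{thm:Deltalow} is stated for \emph{connected} graphs, whereas a cover $G$ need not be connected. I would resolve this either by applying the theorem component-by-component (each component $C$ of $G$ projects onto $V(F)$ because $F$ is connected and $\pi$ is locally bijective, so $C$ is itself a $k_C$-fold cover of $F$ with $\sum_C k_C = k$), or, more cleanly, by noting that the counting argument underlying the theorem---each vertex of a total dominating set has at most $\Delta$ neighbours---gives $\gamma_t(H)\geq |V(H)|/\Delta(H)$ for any graph $H$ with no isolated vertices, with no connectivity hypothesis required.
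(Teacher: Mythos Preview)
Your proposal is correct and follows exactly the route the paper indicates: the paper states the observation without a separate proof, simply citing Theorem~\ref{thm:Deltalow} for part~(\ref{obs:k-tbigger}) and leaving part~(\ref{obs:g-tbigger}) as self-evident. Your write-up fills in precisely the details one would expect---applying the $n/\Delta$ bound with $|V(G)|=k|V(F)|$ and $\Delta(G)=\Delta(F)$ for~(\ref{obs:k-tbigger}), and projecting a minimum total dominating set via $\pi$ for~(\ref{obs:g-tbigger})---and you are right to flag and resolve the connectivity hypothesis in Theorem~\ref{thm:Deltalow}, a wrinkle the paper passes over silently.
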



\begin{remark}\label{prop:1sttlowbd}
Let $F$ be a graph without isolated vertices. Suppose that a graph $G$ is a $k$-fold cover of $F$. Then $\gamma_t(G) \geq \sqrt{k\gamma_t(F)}$. 
\end{remark}


For $r$-regular graphs with $3 \leq r \leq 5$, we have the following lower bound.



\begin{thm}\label{thm:3/4tdomreg}
Suppose that a graph $G$ is a $k$-fold cover of a graph $F$. Then 

\begin{enumerate}[label=\emph{(\alph*)},ref=\alph*]
    \item \label{it:3tdom}
    $\gamma_t(G) \geq \frac{3}{5}k\gamma_t(F)$ if $F$ is a $3$-regular graph.
\\
    \item \label{it:45tdom}
    $\gamma_t(G) \geq \frac{1}{2}k\gamma_t(F)$ if $F$ is a $4$ or $5$-regular graph.
\end{enumerate}
\end{thm}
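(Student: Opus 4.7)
The plan is to imitate the proof of Theorem~\ref{thm:3/4domreg}, replacing the domination-number bounds used there by their total-domination analogues stated earlier in the excerpt. Since $F$ is $r$-regular with $r\geq 3$, Proposition~\ref{prop:pertiesgc}\eqref{it:dgpreserve} gives that $G$ is also $r$-regular, so neither graph has an isolated vertex and both $\gamma_t(F)$ and $\gamma_t(G)$ are well-defined. Set $n=|V(F)|$ so that $|V(G)|=kn$.

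The key inputs are Theorem~\ref{thm:Deltalow} applied (componentwise, if necessary) to $G$, which yields $\gamma_t(G)\geq kn/r$, together with an upper bound on $\gamma_t(F)$: Theorem~\ref{thm:Deltaup} gives $\gamma_t(F)\leq n/2$ when $r\in\{3,4\}$, while Theorem~\ref{thm:Deltaup5} gives the stronger $\gamma_t(F)\leq 2453n/6500$ when $r=5$. Assuming for contradiction that $\gamma_t(G)<c_r k\gamma_t(F)$ with $c_3=3/5$ and $c_4=c_5=1/2$, these two bounds sandwich $\gamma_t(G)$ between explicit multiples of $kn$; in each case the lower multiple already meets or exceeds the upper one, producing the required contradiction. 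The three cases $r=3,4,5$ are handled separately, exactly as in Theorem~\ref{thm:3/4domreg}.

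I do not foresee any real obstacle: the argument is a direct analogue of the one for the ordinary domination number, and the only content is a short numerical check in each case (e.g.\ $\tfrac13>\tfrac{3}{10}$ for $r=3$, $\tfrac14\geq\tfrac14$ for $r=4$, and $\tfrac15>\tfrac{2453}{13000}$ for $r=5$). The one mild subtlety is that the $r=5$ case genuinely needs the stronger upper bound from Theorem~\ref{thm:Deltaup5}: the weaker bound $n/2$ would only yield $\gamma_t(G)\geq \tfrac{2}{5}k\gamma_t(F)$, falling short of the claimed $\tfrac12$.
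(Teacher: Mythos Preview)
Your proposal is correct and follows essentially the same approach as the paper, which simply says to imitate the proof of Theorem~\ref{thm:3/4domreg} using Theorems~\ref{thm:Deltalow}, \ref{thm:Deltaup}, and~\ref{thm:Deltaup5}. Your observation that Theorem~\ref{thm:Deltaup5} is genuinely needed for the $r=5$ case, and your componentwise application of Theorem~\ref{thm:Deltalow} to handle possibly disconnected $G$, are both accurate refinements of what the paper leaves implicit.
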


\begin{proof}
Using Theorems~\ref{thm:Deltalow}, \ref{thm:Deltaup}, and~\ref{thm:Deltaup5}, one can prove the result in the same way as the proof for Theorem~\ref{thm:3/4domreg}.
\end{proof}

Finally, we consider the connected domination number. Note that by definition, a graph admits a connected dominating set if and only if the graph is connected.

We observe the following.

\begin{observation}\label{obs:cdom}
~
\begin{enumerate}[label=(\roman*),ref=\roman*]
    \item\label{obs:ck-bigger} If a connected graph $G$ is a $k$-fold cover of a connected graph $F$, then $\gamma_c(G) \geq k$ because $\gamma_c(G) \geq \gamma(G)$.
    \item\label{obs:cg-bigger}  If a connected graph $G$ is a $k$-fold cover of a connected graph $F$, then $\gamma_c(G) \geq \gamma_c(F)$.
\end{enumerate}
\end{observation}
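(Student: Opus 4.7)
My plan is to handle the two parts of Observation~\ref{obs:cdom} separately, since they rely on different ideas.

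For part~(\ref{obs:ck-bigger}) the author already sketches the argument: every connected dominating set is in particular a dominating set, so $\gamma_c(G) \geq \gamma(G)$, and then Observation~\ref{obs:dom}(\ref{obs:k-bigger}) (which itself comes from Theorem~\ref{thm:deltalow}) gives $\gamma(G) \geq k$. The only thing worth writing out is the reason the bound $n/(1+\Delta)$ applied to $G$ produces $k$: if $F$ has $n$ vertices then $\Delta = \Delta(F) \leq n-1$ and $|V(G)| = kn$, so Theorem~\ref{thm:deltalow} gives $\gamma(G) \geq kn/(1+\Delta) \geq kn/n = k$. This is a one-line chain of inequalities.

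For part~(\ref{obs:cg-bigger}) the natural approach is to push a minimum connected dominating set of $G$ down to $F$ via the projection $\pi$. Let $S$ be a connected dominating set of $G$ with $|S| = \gamma_c(G)$, and set $T = \pi(S)$. I will verify that $T$ is a connected dominating set of $F$. First, $T$ dominates $F$: given $v \in V(F) \setminus T$, pick any $v' \in \pi^{-1}(v)$; by the definition of $T$, $v' \notin S$, so $v'$ has a neighbour $u' \in S$, and then $\pi(u') \in T$ is a neighbour of $v$ in $F$ because $\pi$ maps the neighbours of $v'$ bijectively onto the neighbours of $v$. Second, the subgraph $F[T]$ is connected: for any two vertices $u, v \in T$, pick preimages $u', v' \in S$, take a path from $u'$ to $v'$ inside the connected graph $G[S]$, and observe that $\pi$ sends this path to a walk in $F[T]$ joining $u$ to $v$ (adjacency is preserved because $\pi$ is a cover projection). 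Thus $T$ is a connected dominating set of $F$, and $\gamma_c(F) \leq |T| \leq |S| = \gamma_c(G)$.

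Neither step is a serious obstacle; the only small care needed is in the domination argument to make sure we pick a preimage of the undominated vertex and exploit the bijective neighbourhood property to pull its dominator back to $F$, and in the connectivity argument to note that $\pi$ sends paths to walks (not necessarily paths, but walks suffice for connectivity of the induced subgraph).
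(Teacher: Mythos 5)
Your proof is correct and is exactly the reasoning the paper leaves implicit: part (i) follows from $\gamma_c(G)\geq\gamma(G)$ together with Theorem~\ref{thm:deltalow} applied to $G$ (using $\Delta\leq n-1$), and part (ii) follows by projecting a minimum connected dominating set of $G$ to $F$, where the bijective-neighbourhood property gives domination and the fact that $\pi$ sends paths to walks gives connectedness. Nothing is missing.
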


\begin{remark}\label{prop:1stclowbd}
Let $F$ and $G$ be connected graphs. If $G$ is a $k$-fold cover of $F$, then $\gamma_c(G) \geq \sqrt{k\gamma_c(F)}$. 
\end{remark}


\begin{thm}\label{thm:cloupdom}
Let $F$ and $G$ be connected graphs. Suppose that $G$ is a $k$-fold cover of $F$. Then 
$\gamma_c(G) \leq k(\gamma_c(F) + 2) -2$. Moreover, this bound is tight.
\end{thm}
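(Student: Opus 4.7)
The plan rests on the standard duality between spanning trees with many leaves and connected dominating sets: for any connected graph $H$ with $|V(H)|\ge 2$, the non-leaves of every spanning tree of $H$ form a connected dominating set, so $\gamma_c(H)\le |V(H)|-\ell$ whenever $H$ admits a spanning tree with $\ell$ leaves. In particular, I would start by fixing a spanning tree $T$ of $F$ with exactly $|V(F)|-\gamma_c(F)$ leaves; such a $T$ exists by applying the duality to $F$ itself (start from an optimal connected dominating set of $F$, take a spanning tree of the subgraph it induces, and attach every remaining vertex as a leaf via a neighbour in the dominating set).

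By Proposition~\ref{prop:pertiesgc}\eqref{it:compt}, the lift $\pi^{-1}(T)$ consists of $k$ pairwise vertex-disjoint copies of $T$, and since $T$ spans $F$ this is a spanning forest of $G$ carrying $k(|V(F)|-\gamma_c(F))$ leaves in total. Next I would complete this forest to a spanning tree $T'$ of $G$ by adding $k-1$ edges of $E(G)\setminus E(\pi^{-1}(T))$ that connect the $k$ copies; these exist because $G$ is connected (take any spanning tree of the multigraph obtained by contracting each copy of $T$ to a single vertex). A forest-leaf incident to any added edge has degree $\ge 2$ in $T'$ and hence ceases to be a leaf, so the $k-1$ added edges destroy at most $2(k-1)$ leaves of the forest. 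Applying the duality to $G$ with the spanning tree $T'$ then gives
\[
\gamma_c(G)\ \le\ |V(G)|-\ell(T')\ \le\ k|V(F)|-\bigl(k(|V(F)|-\gamma_c(F))-2(k-1)\bigr)\ =\ k(\gamma_c(F)+2)-2.
\]

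For tightness I would take $F=C_n$ with $n\ge 3$ and $G=C_{kn}$, the obvious $k$-fold cyclic cover: since $\gamma_c(C_m)=m-2$ for every $m\ge 3$, both $\gamma_c(G)$ and $k(\gamma_c(F)+2)-2$ equal $kn-2$. The whole argument is essentially a counting calculation built on Proposition~\ref{prop:pertiesgc}\eqref{it:compt}, and no step is genuinely difficult; the only place requiring mild care is the leaf bookkeeping under the $k-1$ connecting edges, which is handled by the crude but sufficient bound of at most two destroyed leaves per added edge.
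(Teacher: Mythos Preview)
Your argument is correct and proceeds by a genuinely different route than the paper. The paper lifts only a spanning tree of the subgraph induced by a minimum connected dominating set $S$ of $F$; this yields $k$ disjoint trees on $k\gamma_c(F)$ vertices whose union already dominates $G$, and the authors then link these trees with $k-1$ shortest paths, each of length at most $3$ (hence contributing at most two new internal vertices) because a longer path would contain an undominated vertex. You instead lift a \emph{full} spanning tree of $F$, so your components are already spanning and you can connect them with $k-1$ single edges of $G$; the price is paid in destroyed leaves rather than in added vertices, and the max-leaf/connected-domination duality converts the leaf count into the desired bound. Your approach avoids the path-length argument entirely and makes the $2(k-1)$ overhead transparent; the paper's approach is more self-contained in that it builds the connected dominating set of $G$ explicitly without invoking the duality.

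Two minor points. First, your opening lemma ``$\gamma_c(H)\le |V(H)|-\ell$ for $|V(H)|\ge 2$'' fails at $|V(H)|=2$ (both vertices of $K_2$ are leaves, yet $\gamma_c(K_2)=1$); you need $|V(H)|\ge 3$ so that the non-leaf set is nonempty. This is harmless here, since if $|V(F)|\le 2$ then a connected $k$-fold cover forces $k=1$ and the statement is trivial. Second, your claim that $T$ has \emph{exactly} $|V(F)|-\gamma_c(F)$ leaves is true for $|V(F)|\ge 3$, but your inequality only needs ``at least'' that many, which your construction immediately gives. The tightness example via $F=C_n$, $G=C_{kn}$ is identical to the paper's.
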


\begin{proof}
Let $S$ be a connected dominating set of $F$ with a minimum cardinality. Then, by definition, the elements of $S$ induce a connected subgraph $F'$ of $F$. Let $T'$ be a spanning tree of $F'$. Then from Proposition~\ref{prop:pertiesgc}~\eqref{it:compt}, $\pi^{-1}(T')$ is a disjoint union of trees of $G$ isomorphic to $T'$. Let $T_1, … ,T_k$ be the connected components of $\pi^{-1}( T’)$. Note that the set of vertices of the subgraph $\cup_{i=1}^k T_i \subset G$ is a dominating set for $G$. We construct a connected subgraph $G'$ of $G$ containing $\cup_{i=1}^k T_i$ and having no more than $k(\gamma_c (K) + 2) - 2$ vertices (note that $\cup_{i=1}^k T_i$ has $k \gamma_c (F)$ vertices). The subgraph $G'$ is constructed by induction. We start with setting $G'_1 = T_1$ and with considering a shortest path $p_1$ between $G'_1$ and $\cup_{i=2}^k T_i$. Up to relabelling, we can assume that the endpoint of $p_1$ not belonging to $G'_1$ belongs to $T_2$. The path $p_1$ cannot have more than three edges, as otherwise it would contain a vertex not adjacent to any vertices of $\cup_{i=1}^k T_i$. We define $G'_2 = G'_1 \cup p_1 \cup T_2$, and consider a shortest path $p_2$ between $H_2$ and $\cup_{i=3}^k T_i$. The path $p_2$ cannot have more than $3$ edges. Up to relabelling we can assume that the endpoint of $p_2$ not belonging to $G'_2$ belongs to $T_3$, and we define $G'_3 = G'_2 \cup p_2 \cup T_3$. Repeating this procedure, we obtain a connected subgraph $G'=G'_{k-1} \subset G$ which contains $\cup_{i=1}^k T_i$ and whose number of vertices is at most $k \gamma_c(F) + 2(k-1)$, as required.

If $F = C_n$ (for $n \geq 3$), then $G = C_{kn}$. It is known that $\gamma_c(C_n) = n-2$. So, $\gamma_c(G) = kn-2 = k(n -2 + 2)-2$, which proves that the bound is tight.
\end{proof}

\begin{figure}[h]
    \centering
    \begin{tikzpicture}[scale=0.9]
  \draw[ thick ] (-8,0)--(0,0); 
   \draw[ thick ] (-7,-1)--(1,-1); 
    \draw[ thick ] (-7,-2)--(1,-2); 
     \draw[ thick ] (-8,-3)--(0,-3);
 \draw[ thick ] (-8,0)--(-8,-3); 
  \draw[ thick ] (-8,0)--(-7,-1); 
   \draw[ thick ] (-8,-3)--(-7,-2); 
 \draw[ thick ] (-7,-1)--(-7,-2); %
      \draw[ thick ] (-6,0)--(-6,-3);
 \draw[ thick ] (-6,0)--(-5,-1); 
  \draw[ thick ] (-6,-3)--(-5,-2); 
   \draw[ thick ] (-5,-1)--(-5,-2); %
      \draw[ thick ] (-4,0)--(-4,-3);
 \draw[ thick ] (-4,0)--(-3,-1); 
  \draw[ thick ] (-4,-3)--(-3,-2); 
   \draw[ thick ] (-3,-1)--(-3,-2);%
      \draw[ thick ] (-2,0)--(-2,-3);
 \draw[ thick ] (-2,0)--(-1,-1); 
  \draw[ thick ] (-2,-3)--(-1,-2); 
   \draw[ thick ] (-1,-1)--(-1,-2);%
      \draw[ thick ] (0,0)--(0,-3);
 \draw[ thick ] (0,0)--(1,-1); 
  \draw[ thick ] (0,-3)--(1,-2); 
   \draw[ thick ] (1,-1)--(1,-2);
\filldraw[black] (-8,-3) circle (3pt);
\draw[thick, fill=black] (-7,-2) circle (3pt);
\draw[thick, fill=black] (-5,-2) circle (3pt);
\draw[thick, fill=black] (-3,-2) circle (3pt);
\draw[thick, fill=black] (-1,-2) circle (3pt);
\draw[thick, fill=black] (1,-2) circle (3pt);
\draw[thick, fill=black] (-6,0) circle (3pt);
\draw[thick, fill=black] (-2,0) circle (3pt);
\draw[thick, fill=black] (-5,-1) circle (3pt);
\draw[thick, fill=black] (-1,-1) circle (3pt);
\filldraw[thick, fill=white] (-8,0) circle (3pt);
\filldraw[thick, fill=white] (-4,0) circle (3pt);
\filldraw[thick, fill=white] (0,0) circle (3pt);
\filldraw[thick, fill=white] (-6,-3) circle (3pt);
\filldraw[thick, fill=white] (-2,-3) circle (3pt);
\filldraw[thick, fill=white] (-4,-3) circle (3pt);
\filldraw[thick, fill=white] (0,-3) circle (3pt);
\filldraw[thick, fill=white] (-7,-1) circle (3pt);
\draw[thick, fill=black] (-5,-1) circle (3pt);
\filldraw[thick, fill=white] (-3,-1) circle (3pt);
\filldraw[thick, fill=white] (1,-1) circle (3pt);
    \end{tikzpicture}
    \caption{The graph $H$.}
    \label{fig:conloweam}
\end{figure}

\begin{example} 
In Figure~\ref{fig:conloweam}, the black vertices are connected dominating set of a graph $H$. So, $\gamma_c(H) \leq 9$. Without loss of generality, we consider the top horizontal line in Figure~\ref{fig:conloweam}. We can dominate the five vertices with exactly two vertices, and the two vertices cannot be adjacent. Since we are forming a connected dominating set, each of the two vertices must be adjacent to a vertex in the connected dominating set. So we select additional two vertices on the second-top horizontal line. Again, we can dominate the third-top horizontal line with two vertices and they are not adjacent. So, we have to select their common adjacent vertex to be in the connected dominating set. Finally, we dominate the bottom horizontal line with two vertices.  Thus, $\gamma_c(H) = 9$. Let a graph $G$ be 
the $5\times8$ grid with an edge connecting every pair $u_i, v_i$, for $i = 1, 2, \dots, 5$, where $u_i, v_i$ (for $i = 1, 2, \dots, 5$) are the vertices of the opposite ends of the $5 \times 8$ grid, and $u_i, v_i$ lie on the same straight line in the grid. 
Then $G$ is a $2$-fold cover of graph $H$. The connected domination number of the $5 \times 8$ grid is $17$ (see \cite{goto2021connected}) and hence $\gamma_c(G) \leq 17$. But $17 < 2 \times 9 = 18$. So in general, it is not true that $\gamma_c(G) \geq k\gamma_c(F)$ if $G$ is a $k$-fold cover of  $F$, where $G$ and $F$ are connected graphs.
\end{example}

\section{Conclusion}
In this paper, we investigated how three domination parameters behave under graph covers. 
We hope that another major direction of research on domination in graphs is suggested by the results and the conjecture presented in this paper. Furthermore, our results suggest that the problem of characterising graphs that achieve equality in any of the upper bounds would be interesting.

\section*{Acknowledgements}
The author extends his gratitude to Michael Payne and Yuri Nikolayevsky for their invaluable discussions, suggestions, and continuous encouragement throughout this paper. Moreover, the author would like to express his thanks to Shiksha Shiksha for proofreading a draft of the paper. 






 \bibliographystyle{plain}
\bibliography{references}

\end{document}